\newlist{ass}{enumerate}{1}
\setlist[ass]{label = {\bf (A\arabic*)}, resume}
\newlist{assBV}{enumerate}{1}
\setlist[assBV]{label = {\bf (A\arabic*-BV)}, resume}
\newlist{hyp}{enumerate}{1}
\setlist[hyp]{label = {\bf (H\arabic*)}, resume}
\newlist{obj}{enumerate}{1}
\setlist[obj]{label = {\bf (O\arabic*)}, resume}
\newlist{lyap}{enumerate}{1}
\setlist[lyap]{label = {\bf (L\arabic*)}, resume}
\theoremstyle{plain}
\newtheorem{lem}{Lemma}
\newtheorem{prop}{Proposition}
\newtheorem{thm}{Theorem}
\theoremstyle{definition}
\newtheorem{defn}{Definition}
\newtheorem{exam}{Example}
\theoremstyle{remark}
\newcommand{\N}{\mathbb{N}}
\newcommand{\R}{\mathbb{R}}
\newcommand{\cC}{\mathcal{C}}
\newcommand{\cI}{\mathcal{I}}
\newcommand{\cK}{\mathcal{K}}
\newcommand{\cM}{\mathcal{M}}
\newcommand{\cN}{\mathcal{N}}
\newcommand{\cO}{\mathcal{O}}
\newcommand{\cP}{\mathcal{P}}
\newcommand{\cS}{\mathcal{S}}
\newcommand{\cT}{\mathcal{T}}
\DeclareMathOperator{\bd}{bd}
\DeclareMathOperator{\inn}{int}
\newcommand{\eps}{\varepsilon}
\newcommand{\ul}[1]{\underline #1}
\newcommand{\ol}[1]{\overline #1}
\colorlet{Darkred}{red!50!black}
\colorlet{Darkgreen}{green!50!black}
\author[M. Souaiby]{Marianne Souaiby}
\address{LAAS -- CNRS, University of Toulouse\\ 31400 Toulouse\\ France}
\email{msouaiby@laas.fr}
\author[A. Tanwani]{Aneel Tanwani}
\address{LAAS -- CNRS, University of Toulouse\\ 31400 Toulouse\\ France}
\email{aneel.tanwani@laas.fr}
\urladdr{http://homepages.laas.fr/atanwani}
\author[D. Henrion]{Didier Henrion}
\address{LAAS -- CNRS, University of Toulouse\\ 31400 Toulouse\\ France; and Fac. Elec. Engr., Czech Tech. Univ. Prague, Czechia.}
\email{henrion@laas.fr}
\urladdr{http://homepages.laas.fr/henrion}
\title[Semidefinite Programs for Lyapunov Functions]{Computation of Lyapunov Functions under State Constraints using Semidefinite Programming Hierarchies}
\thanks{This work is supported by ANR project {\sc ConVan}, grant number ANR-17-CE40-0019-01.}
\begin{document}

\maketitle

\begin{abstract}
We provide algorithms for computing a Lyapunov function for a class of systems where the state trajectories are constrained to evolve within a closed convex set. The dynamical systems that we consider comprise a differential equation which ensures continuous evolution within the domain, and a normal cone inclusion which ensures that the state trajectory remains within a prespecified set at all times. Finding a Lyapunov function for such a system boils down to finding a function which satisfies certain inequalities on the admissible set of state constraints. It is well-known that this problem, despite being convex, is computationally difficult. For conic constraints, we provide a discretization algorithm based on simplicial partitioning of a simplex, so that the search of desired function is addressed by constructing a hierarchy (associated with the diameter of the cells in the partition) of linear programs. Our second algorithm is tailored to semi-algebraic sets, where a hierarchy of semidefinite programs is constructed to compute Lyapunov functions as a sum-of-squares polynomial.
\end{abstract}


\section{Introduction}

Constrained dynamical systems, where the evolution of state trajectories is confined to a predefined set, arise in different applications. 
Mathematically, given a closed convex set $\cS \subset \R^n$, and a continuously differentiable function $f:\R^n \to \R^n$, one possible way to describe the evolution of constrained systems is via the differential inclusion
\begin{equation}\label{eq:compSysGen}
\dot x \in f(x) - \cN_\cS (x)
\end{equation}
where $\cN_\cS (x) \in \R^n$ denotes the outward normal cone to the set $\cS$ at the point $x \in \R^n$. 
An absolutely continuous function $x:[0,T] \to \R^n$ is a solution of \eqref{eq:compSysGen} if there exists a (possibly discontinuous and state-dependent) function $\eta:[0,T] \to \R^n$ such that \eqref{eq:compSysGen} holds for almost every $t$, and $\eta(t) \in -\cN_\cS(x(t))$, for all $t \ge 0$.
In other words, if at a time $t \in [0,T]$, $x(t)$ is in the interior of $\cS$, then $\eta(t)$ is essentially equal to $0$. However, if $x(t)$ is on the boundary of set $\cS$, then the vector $\eta(t) \in -\cN_\cS(x(t))$ is chosen such that $\dot x(t) = f(x(t)) +\eta(t)$ points inside the set $\cS$, which allows the motion to continue within the set $\cS$.  In other words, one can also interpret the evolution of the trajectories of system~\eqref{eq:compSysGen} to be constrained in such a manner that $x(t) \in \cS$, for each $t \ge 0$.

System of form~\eqref{eq:compSysGen} are naturally related to {\em projected dynamical systems}, and connections can be drawn between constrained system \eqref{eq:compSysGen} and other classes of nonsmooth systems \citep{BrogTanw20}. By and large, such models have found useful applications in the modeling of electrical circuits, mechanical systems with impacts \citep{acary2011,adly2017,leine2008}. Some variants of these systems are also studied in \citep{TanwBrog18} in the context of estimation and output regulation problems.

In this article, we are interested in developing tools for stability analysis of constrained system using Lyapunov functions. In the literature, we basically find two approaches for finding Lyapunov functions for systems of form \eqref{eq:compSysGen} which mostly focus on $f(x) = Ax$, with $A \in \R^{n \times n}$ being a matrix. In the first approach, the nonsmooth multiplier $\eta$ is seen as a nonlinearity in the feedback and the interaction of the differential equation with the nonlinearity $\eta$ is interpreted as a Lur'e system. With this perspective, and under some {\em passivity} assumptions, the resulting Lur'e system is shown to be asymptotically stable with a quadratic positive definite Lyapunov function. However, this approach imposes certain structural requirements on the system dynamics, which is not always desirable. If one works without such assumptions, then it is important to take into account the constraints imposed on the state trajectories of system \eqref{eq:compSysGen}. With the constraint set being the positive orthant, this second viewpoint is directed at computing the copositive Lyapunov functions (which are positive definite only in the positive orthant, and not necessarily in the entire state space). 
Sufficient conditions for finding copositive Lyapunov functions of complementarity systems appear in \citep{GoelMotr03, GoelBrog04, CamlPang06}, where once again these conditions are aimed at finding copositive matrices with $f(\cdot)$ being linear, so that the quadratic form associated with the resulting copositive matrices describes a Lyapunov function. There is no obvious indication in these works about how they could be generalized to nonlinear $f(\cdot)$ in \eqref{eq:compSysGen}. Even if such conditions could be formulated, the computational complexity of checking such conditions remains unknown.

Stability analysis using copositive Lyapunov functions, for system~\eqref{eq:compSysGen} with $f(\cdot)$ nonlinear and $\cS$ being the positive orthant of $\R^n$, have been addressed in our recent work \citep{SouaTanw19pp} where we have shown that, for a certain class of complementarity systems, if the origin is globally exponentially stable then there exists a continuously differentiable copositive Lyapunov function. For computing such a function numerically. In this paper, we extend those ideas to study more general constraint sets $\cS$ and how our earlier algorithms can be adapted for these broader class of sets.

The first of these algorithms corresponds to checking the inequalities, associated with the search of Lyapunov function, over a finite set only. Here, we present a modification of this algorithm when the constraint set is a polyhedral cone, instead of just the positive orthant. In the literature, similar ideas have been used for checking copositivity of a quadratic form \citep{BunDur08, BunDur09, NiYaZha18}. Among these, the algorithms proposed in \citep{BunDur08, BunDur09} are based on using the homogenous structure of the function, and via suitable partitioning, transform the problem of checking copositivity to solving a linear set of equations. We adopt this philosophy in our work here as well.  In particular, if the vector field $f$ in \eqref{eq:compSysGen} is homogenous, our search boils down to computing a function which satisfies a given set of inequalities on the {\em standard simplex} only. We partition this simplex in an appropriate way, and check the inequalities at the discrete nodes of the partition, and solve for the coefficients of the desired polynomial copositive Lyapunov function. As we increase the nodes in our partition, a converging hierarchy of linear programs is established. The results are backed up by academic examples.

The other method that we use is based on {\em sum-of-squares} (SOS) decomposition of the Lyapunov function. While checking if a function is positive everywhere is numerically hard, checking if it admits an SOS decomposition is a semidefinite program \citep{Powers98}. Hence, numerical tools based on SOS optimization have been developed extensively over the past two decades to compute Lyapunov functions, see e.g. \citep{Parr00, PrajPapa02, HenrGaru05, Chesi09}. In the context of systems with switching vector fields, the construction of Lyapunov functions using SOS is studied in \cite{PapaPraj09, AhmPar17, AhmaJung18}. An overview of sum-of-squares techniques can be found in \citep{Lass15}, and applications of semidefinite programming for solving polynomial inequalities in control systems related problems appear in \citep{Henr13}.
In contrast to our work in \citep{SouaTanw19pp} with constraints being positive orthants only, we consider compact semi-algebraic sets in this work. We provide sufficient conditions which alleviate the need to compute the analytic solution to a quadratic optimization problem (on the boundary of $\cS$) to check the corresponding Lyapunov inequalities. This allows us to overcome the computational burden associated with the SOS technique proposed in \citep{SouaTanw19pp}.


\section{Problem Setup}\label{sec:prob}

In this article, we focus on a particular class of constrained systems described by
\begin{subequations}\label{eq:compSys}
\begin{gather}
\dot x = f(x) + \eta \label{eq:compSysa}\\
\eta \in -\cN_\cS(x), \label{eq:compSysb}
\end{gather}
\end{subequations}
where $f:\R^n \to \R^n$ is locally Lipschitz continuous with $f(0) = 0$. The set $\cS$ is assumed to be closed, and convex, so that $\cN_{\cS}(x)$ is defined as
\[
\cN_{\cS}(x) := \{ \eta \in \R^n \, \vert \, \langle \eta, y - x \rangle \le 0, \  \forall y \in \cS \}. 
\]

Since our aim is to provide computational methods for stability analysis, we work with constraint sets $\cS$ which are finitely generated. In particular, the following assumption is imposed throughout the article:
\begin{ass}[leftmargin=*]
\item \label{ass:basicS}The set $\cS$ is convex, contains the origin $\{0\}$, and is described as
\begin{equation}\label{eq:defConSet}
\cS := \{x \in \R^n \, \vert \, g_i(x) \ge 0, i = 1,\dots, M \}
\end{equation}
for some continuously differentiable functions $g_i:\R^n \to \R$. Furthermore, the gradients $\nabla g_i(x) \neq 0$ in some neighborhood of the set $\{x \in \R^n \, \vert \, g_i(x) = 0\}$.
\end{ass}

Based on the discussion following \eqref{eq:compSysGen}, it follows that if $f$ does not admit finite escape time, then there exists a solution to system~\eqref{eq:compSys} which stays in the set $\cS$ for all times. Such a solution corresponds to the particular selection of $\eta$ in \eqref{eq:compSysb}. In other words, if $x$ is on the boundary of the set $\cS$, then the vector $-\eta$ is given by the projection of $f(x)$ onto the normal cone to the set $\cS$, that is, $\cN_{\cS}(x)$, see \citep{BrogTanw20}. For simulation of such systems, an optimization problem is thus solved, at the boundary of the constraint set, to compute $\eta$.

\subsection{Stability Notions}

We first review the stability notions which are to be adapted with respect to the constrained domain, and then provide the definition of Lyapunov functions which we seek for checking stability of system~\eqref{eq:compSys}.

\begin{defn}[Stability]
The origin is stable in the sense of Lyapunov if for every $\eps > 0$ there exists $\delta>0$ such that 
\[
x_0 \in \cS, \|x_{0}\| \le \delta \Rightarrow \|x(t,x_0)\| \le \eps, \forall t \ge t_{0}.
\]
The origin is locally asymptotically stable if it is stable in the sense of Lyapunov and there exists $\rho >0$ such that 
\[
x_0 \in \cS, \|x_{0}\| \le \rho \Rightarrow \lim\limits_{t \rightarrow +\infty} \|x(t,x_0)\| =0.
\]
The origin is globally asymptotically stable if the latter implication holds for arbitrary $\rho > 0$. The origin is globally exponentially stable if there exists $c_0 > 0$ and $\alpha > 0$ such that $\|x(t,x_0)\| \le c_0 e^{-\alpha t} x_0$, for every $x_0 \in \cS$.
\end{defn}

Compared to the conventional definitions of stability for unconstrained dynamical systems, our domain of interest is reduced to the set $\cS$ in system \eqref{eq:compSys}. Also, the vector field jumps instantaneously at the boundaries of the set $\cS$, which may have an impact on the stability of the system. The following example motivates why it is not enough to analyze stability just by looking at the vector field $f$ in \eqref{eq:compSys}, and that the set $\cS$ must also be taken into consideration.

\begin{exam}\label{ex:consGAS}
Let $f(x) = Ax$ with $A = \begin{bsmallmatrix} -1 & -2\\ -1 &-1\end{bsmallmatrix}$, and $\cS = \{x \in \R^2 \, \vert \, 4x_1 - x_2 \ge 0, 4x_2 - x_1\ge 0\}$. Matrix $A$ is not Hurwitz stable since one of its eigenvalues is in the right-half complex plane. However, constrained system \eqref{eq:compSys} is globally asymptotically stable, see our later Example \ref{ex:disret} in Section~\ref{sec:conic} for a proof based on a Lyapunov function. Hence, this example shows that the constraints make the system stable, even if the unconstrained system is unstable.
\end{exam}

Similarly, one can construct examples where the vector field $f$, without any constraints would result in state trajectories converging to the origin, but the presence of constraints makes the dynamical system \eqref{eq:compSys} unstable, for the same vector field. Such examples show that, when developing Lyapunov methods for analyzing stability, it is not enough to just look at the vector field $f$, without looking at set $\cS$.

\subsection{Lyapunov Functions with Constraints}

Based on the above notions, one has to adapt the notion of Lyapunov functions when analyzing the stability of constrained systems of the form~\eqref{eq:compSys}. It is thus of interest to introduce Lyapunov functions which describe the qualitative behavior of the state trajectories on the set $\cS$ only.
With this observation, the following definition of Lyapunov functions for \eqref{eq:compSys} provides more flexibility:

\begin{defn}[Constrained Lyapunov Function]\label{def:funcLyap}
System \eqref{eq:compSys} has a continuously differentiable (global) Lyapunov function $V:\R^n \to \R$ with respect to $\cS$ if
\begin{enumerate}
\item There exist class $\cK_\infty$ functions\footnote{A function $\alpha:\R_+ \to \R_+$ is said to be of class $\cK$ if it is continuous, it satisfies $\alpha(0) = 0$, and it is increasing everywhere on its domain. It is said to be of class $\cK_\infty$ if it is, in addition, unbounded.} $\underline \alpha$, $\overline \alpha$ such that
\[
 \underline \alpha(\| x \| ) \le V(x) \le \overline \alpha(\| x \|), \quad \forall \ x\in \cS;
 \]
\item There exists a class $\cK$ function $\alpha$ such that
\begin{subequations}\label{eq:defLyapIneq}
\begin{gather}
\left\langle \nabla V(x),f(x) \right\rangle \le -\alpha(\| x \|), \ \forall \, x \in \inn(\cS), \\
\left\langle \nabla V(x),f(x)+\eta_x \right\rangle \le -\alpha(\| x \|), \ \forall x \in \bd(\cS),
\end{gather}
where $-\eta_x$ is the projection of $f(x)$ on $\cN_\cS(x)$, such that $f(x) + \eta_x \in \cT_{\cS}(x)$.
\end{subequations}
\end{enumerate}
\end{defn}

In this article, we are interested in computing Lyapunov functions in the sense of Definition~\ref{def:funcLyap} and for that, two classes of algorithms are proposed in Section~\ref{sec:conic} and Section~\ref{sec:sos}, depending upon the structure imposed on the vector field $f$ and the constraint set $\cS$ in \eqref{eq:compSys}.

\section{Conic Sets and Copositive Programming}\label{sec:conic}

We first consider the question of computing the Lyapunov function for system~\eqref{eq:compSys} under the following assumption

\begin{ass}[leftmargin=*]
\item\label{ass:lipf} The function $f:\R^n \to \R^n$ is locally Lipschitz continuous, satisfies $f(0) = 0$, and is homogenous, that is, there exists $d \in \R$ such that for every $\lambda > 0$,
\[
f(\lambda x) = \lambda^d f(x).
\]
\item\label{ass:coneS} The set $\cS$ is a closed convex cone, which we denote by $K$ and is described as
\[
K = \{ x \in \R^n \, \vert \, C x \ge 0\}
\]
for some matrix $C \in \R^{m \times n}$.
\end{ass}

\subsection{Necessary Conditions} 
When addressing the question of computing a Lyapunov function, the first fundamental question is to determine the class of functions where the search should be performed. For the systems of form~\eqref{eq:compSys}, under assumptions \ref{ass:lipf} and \ref{ass:coneS}, an answer to this question appears in our recent work \citep{SouaTanw19pp}. The following statement thus specifies this function class:

\begin{thm}\label{thm:converseHom}
Consider the system~\eqref{eq:compSys} under assumptions \ref{ass:lipf} and \ref{ass:coneS}. If the origin is globally exponentially stable, then there exists a homogenous polynomial $h:\R^n \to \R$, such that, for some non-negative integer $r$, the function
\begin{equation}\label{eq:defVrational}
V(x) = \frac{h(x)}{\|x\|^{2r}}
\end{equation}
is a Lyapunov function for \eqref{eq:compSys}.
\end{thm}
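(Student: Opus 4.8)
The plan is to follow the classical converse Lyapunov recipe — build an integral Lyapunov function, smooth it, then approximate it by a rational function of the prescribed form — while tracking homogeneity and the conic constraint throughout.

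\emph{Reductions from homogeneity.} Since $\cN_K(\lambda x)=\cN_K(x)$ for every $\lambda>0$ (the normal cone to a cone is positively homogeneous of degree $0$), I would first check that if $x(\cdot)$ solves \eqref{eq:compSys} then so does $y(t):=\lambda\,x(\lambda^{d-1}t)$ with multiplier $\lambda^{d}\eta(\lambda^{d-1}t)\in-\cN_K(y(t))$. Pushing the exponential bound from global exponential stability through this scaling shows that the solution from $\lambda x_0$ converges at rate $\alpha\lambda^{d-1}$; since global exponential stability requires a rate uniform over all initial conditions, this forces $d=1$. Henceforth $f$, the selection $x\mapsto\eta_x$, and the vector $f(x)+\eta_x=\Pi_{\cT_K(x)}(f(x))$ are all positively homogeneous of degree $1$, and $x(t,\lambda x_0)=\lambda\,x(t,x_0)$.

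\emph{A continuous homogeneous Lyapunov function.} Set $W(x_0):=\int_0^{\infty}\|x(t,x_0)\|^{2}\,dt$. Global exponential stability makes the integral finite with $W(x_0)\le\tfrac{c_0^{2}}{2\alpha}\|x_0\|^{2}$, while $\|\dot x(t)\|=\|\Pi_{\cT_K(x(t))}(f(x(t)))\|\le\|f(x(t))\|\le L\|x(t)\|$ (projection onto a cone is nonexpansive and fixes $0$) gives $\|x(t,x_0)\|\ge\|x_0\|e^{-Lt}$, hence $W(x_0)\ge\tfrac{1}{2L}\|x_0\|^{2}$; the scaling above yields $W(\lambda x_0)=\lambda^{2}W(x_0)$, and from $W(x(s,x_0))=\int_s^{\infty}\|x(\tau,x_0)\|^{2}\,d\tau$ one reads off the exact decrease $W(x(s,x_0))-W(x_0)=-\int_0^{s}\|x(\tau,x_0)\|^{2}\,d\tau$. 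Lipschitz dependence on initial data for the perturbed sweeping process (monotonicity of $\cN_K$, plus Gronwall) shows $W$ is locally Lipschitz, but in general no more; to obtain a $C^{1}$ object I would run a smoothing argument in the spirit of classical converse theory, adapted to the homogeneous conic setting as in our earlier work, producing $\widehat V\in C^{1}(\R^{n})$ that is positively homogeneous of degree $2$, positive on $K\setminus\{0\}$, and satisfies the strict inequalities \eqref{eq:defLyapIneq} with $\alpha(s)=cs^{2}$ on both $\inn(K)$ and $\bd(K)$.

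\emph{Rational approximation.} Write $\widehat V(x)=\|x\|^{2}\psi(x/\|x\|)$ with $\psi\in C^{1}(\Sigma)$, $\psi>0$, where $\Sigma:=K\cap\{\|x\|=1\}$ is compact. After quotienting out the lineality space of $K$ if necessary, $\Sigma$ contains no antipodal pair, so $\psi$ extends (Tietze/Whitney) to an \emph{even} function $\widetilde\psi\in C^{1}(\{\|x\|=1\})$; by Weierstrass approximation in $C^{1}$ on the sphere pick an even polynomial $p$ with $\|p-\widetilde\psi\|_{C^{1}}$ small, homogenize each even-degree component of $p$ by multiplying with the suitable power of $\|x\|^{2}=\sum_i x_i^{2}$ to get a homogeneous polynomial $h_0$ of some even degree $2r$ with $h_0|_{\{\|x\|=1\}}=p$, and set $h(x):=\|x\|^{2}h_0(x)$ and $V(x):=h(x)/\|x\|^{2r}$, which is of the form \eqref{eq:defVrational}, positively homogeneous of degree $2$, with $V|_{\Sigma}=p$ close to $\psi$ in $C^{1}$. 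The inequalities of Definition~\ref{def:funcLyap} then transfer: $V|_{\Sigma}\ge\tfrac12\min_\Sigma\psi>0$ gives $\underline\alpha(s)=(\min_\Sigma V)s^{2}$, $\overline\alpha(s)=(\max_\Sigma V)s^{2}$; and since $\nabla V$ and $\Pi_{\cT_K(x)}(f(x))$ are homogeneous of degree $1$ with $\|\Pi_{\cT_K(x)}(f(x))\|\le\|f(x)\|$ bounded on $\Sigma$, one gets $\langle\nabla V(x),f(x)+\eta_x\rangle\le\langle\nabla\widehat V(x),f(x)+\eta_x\rangle+\|\nabla V-\nabla\widehat V\|_{\Sigma}\|f\|_{\Sigma}\le -c+\varepsilon M<0$ on $\Sigma$ (with $\eta_x=0$ on $\inn(K)$), hence $\le-(c/2)\|x\|^{2}$ on all of $K$ by degree-$2$ homogeneity of $x\mapsto\langle\nabla V(x),\Pi_{\cT_K(x)}(f(x))\rangle$.

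\emph{Main obstacle.} The hard part is the smoothing in the second paragraph: the flow of \eqref{eq:compSys} is generically only Lipschitz — not $C^{1}$ — in $x_0$ (e.g. at trajectories that reach $\bd(K)$ and slide, the hitting time is nonsmooth in $x_0$), so $W$ is merely Lipschitz and cannot be $C^{1}$-approximated by polynomials directly; producing a genuinely $C^{1}$ homogeneous Lyapunov function while preserving the strict decrease on the boundary — where the dynamics are the discontinuous selection $\Pi_{\cT_K(x)}(f(x))$ — is the technical core. By comparison the rational approximation is routine, the only wrinkle being that homogeneous polynomials of even degree restrict to \emph{even} functions on the sphere, which is precisely why one needs $\Sigma$ free of antipodal pairs.
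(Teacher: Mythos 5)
First, a point of reference: this paper contains no proof of Theorem~\ref{thm:converseHom}. The statement is imported verbatim from the authors' earlier work \citep{SouaTanw19pp}, and the surrounding text says explicitly that the only thing used from it here is the function class \eqref{eq:defVrational}. So your attempt can only be judged on its own terms. The skeleton you lay out is the standard converse-Lyapunov route and is correctly adapted to the conic setting: the scale-invariance of \eqref{eq:compSys} under $x\mapsto\lambda x$ (using that $\cN_K$ is invariant under positive scaling on a cone), the integral functional $W(x_0)=\int_0^\infty\|x(t,x_0)\|^2\,dt$ with its exact dissipation identity and two-sided quadratic bounds, Lipschitz dependence on initial data via monotonicity of $\cN_K$ plus Gronwall, and finally polynomial approximation of the degree-$2$ homogeneous profile on the compact cross-section $K\cap\{\|x\|=1\}$ are all sound.

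The problem is that, as you concede yourself, the proposal does not actually prove the theorem: the passage from the merely Lipschitz $W$ to a $C^1$, homogeneous $\widehat V$ that still satisfies the strict decrease \eqref{eq:defLyapIneq} on $\bd(K)$ is deferred to ``a smoothing argument in the spirit of classical converse theory, adapted \dots as in our earlier work.'' That step \emph{is} the theorem. It is not a routine mollification: $W$ is defined only on $K$, the dynamics on $\bd(K)$ are the discontinuous selection $\Pi_{\cT_K(x)}(f(x))$, and convolving any extension of $W$ averages over initial conditions whose trajectories interact with the boundary in qualitatively different ways, so the decrease inequality does not survive a generic smoothing argument; one must either regularize along the flow or work with the nonsmooth Lyapunov function and a suitable chain rule. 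Until that is supplied, your argument is a reduction of the theorem to its hardest part. Two secondary issues: (i) the deduction that $d=1$ is under-argued --- the rescaled trajectory decaying at rate $\alpha\lambda^{d-1}$ is an upper bound and does not by itself conflict with the faster upper bound from exponential stability; you need either the lower bound $\|x(t,x_0)\|\ge\|x_0\|e^{-Lt}$ established independently of $d$ (your derivation of it uses $\|f(x)\|\le L\|x\|$, i.e.\ already assumes $d=1$), or the observation that applying the scaling in the other direction yields decay at every rate and hence $x(t,x_0)\equiv 0$ for $t>0$, contradicting continuity at $t=0$; (ii) the evenness obstruction in the final approximation step is genuine, and ``quotienting out the lineality space'' is not justified since the dynamics need not respect that splitting --- you should either restrict to pointed cones (which covers the orthant case of \citep{SouaTanw19pp} and the examples in this paper) or explain why a cone containing a line is compatible with an even $V$.
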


The only information from Theorem~\ref{thm:converseHom}, that we will use in the remainder of this section, is the function class for the Lyapunov function specified in \eqref{eq:defVrational}.

\subsection{Polynomial Inequalities}
To compute the Lyapunov function of the form \eqref{eq:defVrational} numerically, we fix the denominator and reformulate our problem as finding the homogenous polynomial in the numerator which satisfies certain inequalities. We carry out the steps by specifying the inequalities that need to be satisfied, and in the next section, provide the algorithm using convex optimization methods that can be implemented for computing such functions.

Based on the result of Theorem~\ref{thm:converseHom}, we consider $V$ of the form
\[
V(x) = \frac{h(x)}{(\sum_{i=1}^{n} x_i^2)^r} = \frac{h(x)}{\|x\|_{2}^{2r}}
\]
where $h(x)$ is a homogeneous polynomial, and $x=(x_{1},x_{2},\dots,x_{n})^\top \in K$, and $r$ is a non-negative integer. The gradient of this function, denoted by $\nabla V(x) \in \R^n$, is
\[
\nabla V(x) = \frac{\|x\|_{2}^2 \nabla h(x) - 2 r h(x) x}{\|x\|_{2}^{2(r+1)}}.
\]
We now introduce
\[ 
s_0(x)=-\|x\|_{2}^2 \left\langle \nabla h(x), f(x) \right\rangle +2rh(x) \left\langle x, f(x) \right\rangle.
\]
Let $F_i := \{ x \in K \, \vert \, (Cx)_i = 0 \}$, denote a face of the cone $K$, for $i \in \left\{1,\dots,m \right\}$, and let $s_i(x)$ be defined as 
\[
s_i(x)= -\left\langle \|x\|_{2}^2 \nabla h(x) - 2 r h(x) x , f(x) + \eta_x\right\rangle, \quad x \in F_i.
\]
To check the conditions in \eqref{eq:defLyapIneq} of Definition~\ref{def:funcLyap}, and find $V$ of the form \eqref{eq:defVrational}, we thus need to find a homogenous function $h$, and a nonnegative integer $r$, such that
\begin{subequations} \label{Algoineq}
\begin{gather}
 h(x) \ge 0, \enspace \forall x \in K\\
 s_{0}(x) \ge 0, \enspace \forall x \in K \\
 s_{i}(x) \ge 0, \enspace \forall x \in F_{i}, \enspace i \in \left\{1,\dots,m\right\}.
\end{gather}
\end{subequations}

\subsection{Algorithm Description} 

The basic idea behind our algorithm for systems with conic sets, and homogenous vector fields, is to use the structure of the system so that the inequalities in \eqref{Algoineq} need to be checked only for finitely many points over a compact set. In our case, this compact set turns out to be a simplex, or a finite union of simplices.\footnote{An $m$-simplex $\Sigma$ is an $m$-dimensional polytope which is the convex hull of its $m+1$ vertices $\{x^0, x^1, \ldots, x^{m}\}$, namely \[ \Sigma := \left\{\theta_0 x^0 + \dots \theta_m x^{m} \bigg \vert\ \sum_{i=0}^{m} \theta_i =1 \enspace \text{and} \enspace \theta_i \ge 0, \enspace \forall \,  i \in \{1,\ldots,m\}\right\}. \]}
We then select a certain number of points in the simplex and evaluate the inequalities \eqref{Algoineq} with a certain polynomial function parameterized by finitely many unknowns.
This allows us to construct an inner approximation of copositive polynomials with respect to cone $K$.

Because of the conic structure of $K$, we get two nice properties that are desirable for implementing an algorithm:

\begin{itemize}[leftmargin=*]
\item Let $\cO_j$, $j = 1,\dots, 2^n$, denote the orthants of $\R^n$, and let $K_j := \cO_j \cap K$, $F_{ij} := \cO_j \cap F_i$, for $i = 1, \cdots, m$. Then, each $K_j$ and $F_{ij}$ is a closed convex polyhedral cone.
\item For a homogenous polynomial $h \in \R^d[x]$ of degree $d$, it holds that
\begin{equation}\label{eq:norm1}
h (x) \ge 0, \ \forall \, x \in K \iff h(x) \ge 0, \ \forall \,  x \in K, \ \|x\|=1.
\end{equation}
\end{itemize}
As a result of these properties, it is convenient to introduce the simplices obtained by intersecting the cones $K_j$ or $F_{ij}$ with the set $\{x \in\R^n \, \vert \, \| x \|_{1} =1\}$, that is,
\[
\Sigma_j := \left\{x \in K_j \vert\ \|x\|_{1}=1\right\}, \quad \Sigma_{ij} := \left\{x \in F_{ij} \vert\ \|x\|_{1}=1\right\}.
\]
We next reduce the task to checking the inequalities on a finite number of points in each of the simplex $\Sigma_j$ and $\Sigma_{ij}$.

Because of equivalence in \eqref{eq:norm1}, positivity of a homogenous polynomial $h$ is then expressed as
\[
h(x) \ge 0 \enspace \text{for all} \enspace x \in \bigcup_{j=1}^{2^n} \Sigma_j \cup \big(\cup_{i=1}^m \Sigma_{ij}\big)
\]

\subsubsection{Simplex Discretization:}
Our goal is to discretize the simplex $\Sigma$ and obtain a hierarchy of linear inequalities with respect to the discretization points which allow us to find the desired function.

\begin{defn} \label{simplicialpartition}
Let $\Sigma$ be a simplex in $\R^n$. A family $\cP=\left\{\Delta^1,\dots,\Delta^m\right\}$ of simplices satisfying
\[
\Sigma=\bigcup_{i=1}^m \Delta^i \enspace \text{and} \enspace \inn\Delta^i \cap \inn\Delta^j=\emptyset \enspace \text{for} \enspace i \ne j
\]
is called a simplicial partition of $\Sigma$.
\end{defn}

For a simplicial partition $\cP=\{\Delta^1,\dots,\Delta^m\}$ of $\Sigma$, we let $W_{\cP}$ denote the set of all vertices of simplices in $\cP$, and $E_{\cP}$ the set of all edges of simplices in $\cP$. The cardinality of $W_{\cP}$ is $p=|W_{\cP}|$.

\subsubsection{Tensor representation:} As a final tool, we introduce {\em tensors}, which generalize the notion of a matrix, and will be used for compact representation of polynomials and its values on the vertices of simplical partition.

\begin{defn}
A tensor $H$ of order $d$ over $\R^n$ is a multilinear form
\begin{equation*}
\begin{array}{ccccc}
& \underbrace{\R^n \times \R^n \times \dots \times \R^n}_{d \text{ times}} & \to & \R \\
 & (x^1,x^2,\dots,x^d) & \mapsto & H[x^1,x^2,\dots,x^d] \\
\end{array}
\end{equation*}
where 
\[
H[x^1,x^2,\dots,x^d]= \sum_{i_1=1}^n \sum_{i_2=1}^n \sum_{i_d=1}^n h_{i_1,i_2,\dots,i_d} x^1_{i_1} \cdots x^d_{i_d}
\]
and $h_{i_1,i_2,\dots,i_d}$ corresponds to a real number from a table with $n^d$ entries, indexed by $i_1,i_2,\dots,i_d \in \{1, \dots, n\}$. We say that $H$ is symmetric if
\[
h_{i_1,i_2,\dots,i_d} = h_{j_1,j_2,\dots , j_d}
\]
whenever $i_1+i_2+\dots+i_d = j_1+j_2+\dots+j_d$, for all possible permutations $i_1,i_2,\dots,i_d$ and $j_1,j_2,\dots , j_d$ of $\{1, \dots, n\}$.
\end{defn}
A matrix $H \in \R^{n \times n}$ describes a tensor of order $2$ over $\R^n$, also called a quadratic form, where the coefficients of the quadratic form belong to a table with $n^2$ entries $a_{i,j}$ with $i,j=\{1,\dots, n\}$. 
A general homogeneous polynomial $h \in \R^d[x]$, with $d \ge 2$, can be written as
\[
h(x)=h(x_1,\dots,x_n)=\sum \limits_{\underset{i_{1}+\dots+i_{n}=d}{i=(i_1, \dots, i_n)}} h_{i} x_{1}^{i_1} \cdots x_{n}^{i_n}.
\]
Using the tensor representation, $h$ can also be compactly written in the form
\begin{equation}\label{tensorpol}
h(x)=H[\underbrace{x,x,\dots,x}_{\text{$d$ times}}]
\end{equation}
where $H$ is a symmetric tensor.

\subsubsection{Constructing the Hierarchy:} For a fixed partition $\cP = \{\Delta^1, \cdots, \Delta^\ell\}$ of the simplex $\Sigma$, we look at the vertices of $\Delta^k$, and evaluate the tensor at all possible combinations of the vertices within $\Delta^k$. The following proposition provides a feasibility check for this method:
\begin{lem} \label{simpconv}
For a simplicial partition $\cP$, with set of vertices $W_{\cP} = \{ v_1, \cdots, v_p\}$, and $\Delta=\mathrm{conv}\{v_{1},\dots,v_{p}\}$. If
\begin{equation} \label{assum}
H[v_{i_1},v_{i_2},\dots,v_{i_d}] \ge 0 \enspace \text{for all} \enspace i_1,i_2,\dots,i_d=1,\dots,p,
\end{equation}
then $h(x)= H[x,x,\dots,x] \ge 0$ for all $x \in \Delta$.
\end{lem}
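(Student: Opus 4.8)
The plan is to exploit the multilinearity and symmetry of the tensor $H$ together with the barycentric-coordinate description of the simplex $\Delta = \mathrm{conv}\{v_1,\dots,v_p\}$. First I would fix an arbitrary $x \in \Delta$ and write it in barycentric coordinates: $x = \sum_{k=1}^p \theta_k v_k$ with $\theta_k \ge 0$ and $\sum_k \theta_k = 1$. Then I would substitute this expression into $h(x) = H[x,x,\dots,x]$ and expand using multilinearity in each of the $d$ slots, obtaining
\[
h(x) = H\Bigl[\sum_{k_1} \theta_{k_1} v_{k_1}, \dots, \sum_{k_d} \theta_{k_d} v_{k_d}\Bigr] = \sum_{k_1=1}^p \cdots \sum_{k_d=1}^p \theta_{k_1}\cdots\theta_{k_d}\, H[v_{k_1},\dots,v_{k_d}].
\]
This is the key algebraic step and it is essentially immediate from the definition of a tensor as a multilinear form.

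The second step is to observe that every term in this finite sum is nonnegative: the coefficient $\theta_{k_1}\cdots\theta_{k_d}$ is a product of nonnegative barycentric weights, hence nonnegative, and the factor $H[v_{k_1},\dots,v_{k_d}]$ is nonnegative by the hypothesis \eqref{assum}, since each index $k_j$ ranges over $\{1,\dots,p\}$. Therefore $h(x)$ is a sum of nonnegative reals, so $h(x) \ge 0$. Since $x \in \Delta$ was arbitrary, this gives $h(x) \ge 0$ for all $x \in \Delta$, as claimed. Symmetry of $H$ is not strictly needed for the argument but is consistent with the representation \eqref{tensorpol} of $h$; one could alternatively group the sum by multi-indices and note that the relevant coefficients are then nonnegative combinations of the $H[v_{k_1},\dots,v_{k_d}]$.

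There is essentially no hard obstacle here: the statement is a direct consequence of expanding a $d$-linear form along a convex combination. The only point requiring minor care is bookkeeping — making sure that when $x$ is expanded, the indices $k_1,\dots,k_d$ in the resulting $p^d$-term sum indeed range independently over all of $\{1,\dots,p\}$, so that hypothesis \eqref{assum} (which is stated for all tuples, not just nondecreasing ones) applies to each term. One should also note that $\Delta$ here is the convex hull of the full vertex set $W_{\cP}$ of the partition, which contains the original simplex $\Sigma$; the conclusion for $\Sigma$ then follows a fortiori since $\Sigma \subseteq \Delta$. I expect the write-up to be just a few lines.
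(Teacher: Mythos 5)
Your argument is correct and is essentially identical to the paper's proof: both expand $x=\sum_k\theta_k v_k$ in barycentric coordinates, use multilinearity of $H$ to write $h(x)$ as $\sum_{k_1,\dots,k_d}\theta_{k_1}\cdots\theta_{k_d}H[v_{k_1},\dots,v_{k_d}]$, and conclude nonnegativity term by term from hypothesis \eqref{assum} and $\theta_k\ge 0$. No gaps.
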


\begin{proof}
For each point $x \in \Delta$, we can represent it in the affine hull of $\Delta$ by its uniquely determined barycentric coordinates $\lambda=(\lambda_{1},\dots,\lambda_{p})$ with respect to $\Delta$ i.e.
\[
x=\sum_{j=1}^{p} \lambda_{j}v_{j} \enspace \text{with} \enspace \sum_{j=1}^{p} \lambda_{j}=1.
\]
This gives
\begin{align*}
h(x)
&=H[x,x,\dots,x]\\
&=H\big[\sum_{i_1=1}^{p} \lambda_{i_1}v_{i_1},\sum_{i_2=1}^{p} \lambda_{i_2}v_{i_2},\dots,\sum_{i_d=1}^{p} \lambda_{i_d}v_{i_d}\big]\\
&=\sum_{i_1,i_2,\dots,i_d=1}^{p}H[v_{i_1},v_{i_2},\dots,v_{i_d}]\lambda_{i_1}\lambda_{i_2} \dots \lambda_{i_d}.
\end{align*}
For $x \in \Delta$, we have $\lambda_i \ge 0$, and by the assumption \eqref{assum}, we get $h(x) \ge 0$ for all $x \in \Delta$.  
\end{proof}

The observation of Lemma~\ref{simpconv} leads to the following algorithm for computing the copositive Lyapunov function of the form \eqref{eq:defVrational} satisfying the inequalities \eqref{Algoineq}.

{\bf Algorithm~1:} 
\begin{enumerate}
\item Take $h \in \R[x]$, homogenous of degree $d$, and fix $r \in \N$.
\item For each orthant $\cO_j$, $j =1,\dots,2^n$, compute the sets $K_j = K \cap \cO_j$ and for each $i = 1,\dots, m$, let $F_{ij} = F_i \cap \cO_j$.
\item Identify the simplices $\Sigma_j \subset K_j$, and $\Sigma_{ij} \subset F_{ij}$ which are non-empty.
\item For each nonempty simplex $\Sigma \in \{ \Sigma_j\} \cup \{ \Sigma_{ij}\}$, $j = 1,\dots, 2^n$, $i = 1,\dots,m$,
\begin{enumerate}
\item Compute a simplical partitioning of the set $\Sigma$, denoted by $\{\Delta^1,\dots, \Delta^{\ol{\ell}}\}$, and let $Q^\ell$ be the corresponding set of vertices of $\Delta^\ell$.
\item For each set of $d$ vertices $\{q_1, \dots, q_d\} \in Q^\ell$, solve the LP problem in the coefficients of $h$ corresponding to the constraints 
\begin{equation}\label{eq:checkPosDisc}
H[q_1,\dots,q_d] \ge 0, \quad \text{and } S_k[q_1,\dots,q_d] \ge 0
\end{equation}
where $H,S_k$ denote the tensors of $h,s_k$, $k = 0, \dots, m$ and $\left\{q_1,\dots,q_d\right\} \in Q^\ell$. 
\item If \eqref{eq:checkPosDisc} is infeasible, refine partition, and check \eqref{eq:checkPosDisc} again.
\end{enumerate}
\item Iterate by increasing $d$ and $r$.
\end{enumerate}

As an illustration of our algorithm, we revisit Example~\ref{ex:consGAS} and compute a quadratic Lyapunov function using the discretization method.
\begin{exam}\label{ex:disret}
Consider system \eqref{eq:compSys} with $f(x)=Ax$ and $A = \begin{bsmallmatrix} -1 & -2\\ -1 &-1\end{bsmallmatrix}$ and $K=\{x \in \R^2 \, \vert \, Cx \ge 0 \}$, with $C = \begin{bsmallmatrix} -0.25 & 1\\ 1 & -0.25 \end{bsmallmatrix}$. We apply the discretization method on the three simplices that correspond to $K_j = K \cap \cO_j$,
\begin{gather*}
\Sigma_1= \mathrm{conv}([1,0]^\top, [0,1]^\top),  \Sigma_2 = \mathrm{conv}([1,0]^\top, [0.8,-0.2]^\top)\\
\Sigma_3= \mathrm{conv}([0,1]^\top, [-0.2,0.8]^\top),
\end{gather*}
and the two simplices which correspond to the two faces of the cone reduce to a singleton, that is,
\[
\Sigma_{12} = {[0.8,-0.2]}, \quad \Sigma_{23} = [-0.2,0.8].
\]
Solving the resulting inequalities, we obtain
\begin{equation}
V(x) = 2.9 x_{1}^2 + x_{1}x_{2}+x_{2}^2,
\end{equation}
which indeed satisfies the inequalities in \eqref{Algoineq}.
\end{exam}

\section{Semialgebraic Sets and Sum-of-Squares Computation}\label{sec:sos}

We now present a numerical approach to deal with sets of the form $\cS$ in \eqref{eq:defConSet} where $g_i$ are not necessarily linear. One of the difficulties in checking the Lyapunov conditions is that the corresponding inequality has to be checked for $\eta \in -\cN_{\cS}(x)$ for all $x \in \cS$, which is not feasible in general. In the previous section, we only need to check the inequalities at finitely many points at which $\eta$ could be obtained as a solution to an optimization problem, but this works only under the conic structure of $\cS$. For more general sets without conic structure, it is of interest to obtain Lyapunov functions without having to solve for $\eta$. One way to avoid computation of $\eta$ is to impose certain assumption on the gradient of Lyapunov function and provide sufficient conditions which can be checked independently of $\eta$. We then use these conditions to compute Lyapunov functions using a semidefinite program based on SOS decomposition.

\subsection{Sufficient Conditions}

With the aforementioned motivation, we first provide a set of inequalities as a sufficient condition for checking  asymptotic stability of \eqref{eq:compSys}, which are independent of $\eta$ and use the information of the gradients of the generating functions $g_i$, $i = 1,\dots, M$.

\begin{prop}[Sufficient Conditions]\label{prop:suffGrad}
Consider the system~\eqref{eq:compSys} under assumption \ref{ass:basicS}. Assume that there exists a continuously differentiable $V(\cdot)$ that satisfies the following conditions:
\begin{itemize}
\item $V(0) = 0$, and $ \ul \alpha (\|x \|) \le V(x) \le \ol \alpha(\| x \|)$ for every $x \in \cS$, and some class $\cK$ functions $\ul\alpha$, $\ol \alpha$.
\item $\langle f(x), \nabla V(x) \rangle \le - \alpha \| x \| $, for every $x \in \cS$, and some positive definite function $\alpha$.
\item If $x$ is such that $g_i(x) = 0$, for some $i \in \{1,\cdots, M\}$, then  $\langle \nabla g_i(x), \nabla V(x) \rangle  \le 0$.
\end{itemize}
Then $V$ is a Lyapunov function for system \eqref{eq:compSys} and $0$ is globally asymptotically stable.
\end{prop}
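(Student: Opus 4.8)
The plan is to verify that $V$ satisfies the two requirements of Definition~\ref{def:funcLyap}, since once $V$ is a constrained Lyapunov function the standard Lyapunov argument (adapted to the constrained domain) gives global asymptotic stability of the origin. The first hypothesis of the proposition is literally condition~(1) of Definition~\ref{def:funcLyap}, so the whole content is to derive the decrease inequalities \eqref{eq:defLyapIneq} from the remaining two bullet points. For $x \in \inn(\cS)$ this is immediate: the second bullet gives $\langle \nabla V(x), f(x)\rangle \le -\alpha(\|x\|)$ directly, which is exactly the first inequality in \eqref{eq:defLyapIneq}. The work is therefore concentrated on the boundary.

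So let $x \in \bd(\cS)$, and let $I(x) := \{ i : g_i(x) = 0\}$ be the set of active constraints. The key structural fact is that, since $\cS$ is convex and described as in \eqref{eq:defConSet}, the normal cone $\cN_\cS(x)$ is (under a constraint qualification, which is guaranteed by the nonvanishing-gradient part of assumption~\ref{ass:basicS}) the cone generated by the outward normals $-\nabla g_i(x)$, $i \in I(x)$; equivalently $\eta_x$, being the element of $-\cN_\cS(x)$ with $f(x) + \eta_x \in \cT_\cS(x)$, can be written as $\eta_x = \sum_{i \in I(x)} \mu_i \nabla g_i(x)$ for some multipliers $\mu_i \ge 0$. (I would recall or cite this from convex analysis / the projection characterization mentioned after \eqref{eq:compSys}.) Then
\[
\langle \nabla V(x), f(x) + \eta_x \rangle = \langle \nabla V(x), f(x)\rangle + \sum_{i \in I(x)} \mu_i \langle \nabla g_i(x), \nabla V(x)\rangle.
\]
The first term is $\le -\alpha(\|x\|)$ by the second bullet, and each summand in the second term is $\le 0$: $\mu_i \ge 0$ and, by the third bullet applied at $x$ with $g_i(x) = 0$, $\langle \nabla g_i(x), \nabla V(x)\rangle \le 0$. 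Hence $\langle \nabla V(x), f(x)+\eta_x\rangle \le -\alpha(\|x\|)$, which is the second inequality of \eqref{eq:defLyapIneq}. Thus $V$ is a Lyapunov function in the sense of Definition~\ref{def:funcLyap}, and invoking the corresponding converse/direct statement (or a direct argument: $t \mapsto V(x(t,x_0))$ is absolutely continuous and nonincreasing with $\tfrac{d}{dt}V(x(t)) \le -\alpha(\|x(t)\|)$ for a.e.\ $t$, together with the $\cK$-bounds on $V$ and a LaSalle/comparison argument) yields global asymptotic stability of the origin.

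**The main obstacle** I anticipate is the normal-cone representation on the boundary: writing $\eta_x$ as a nonnegative combination of the active gradients $\nabla g_i(x)$ requires a constraint qualification at every boundary point, and one must check that assumption~\ref{ass:basicS} (convexity of $\cS$, plus $\nabla g_i \ne 0$ near $\{g_i = 0\}$) really suffices — in general one needs something like Slater's condition or the Mangasarian--Fromovitz condition for the active set, and the argument should address the possibility of several simultaneously active constraints. A second, minor point is bridging the gap between the hypothesis "$\langle f(x),\nabla V(x)\rangle \le -\alpha\|x\|$ for a positive definite $\alpha$" and the "$\cK$ function $\alpha$" appearing in Definition~\ref{def:funcLyap}; this is handled by the standard fact that a positive definite function can be lower-bounded on compact sets by a $\cK$ function, so it does not affect the conclusion but should be noted. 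Everything else — absolute continuity of $V\circ x$, monotonicity, and the extraction of convergence to the origin — is routine and I would not dwell on it.
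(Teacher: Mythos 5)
Your proof is correct and takes essentially the same route as the paper's: both write $\eta_x$ as a nonnegative combination of the active gradients $\nabla g_i(x)$ and use the third bullet point to make the extra boundary term nonpositive. The paper simply asserts the normal-cone formula for $\cS$ without dwelling on the constraint qualification you flag, and likewise stops at the Lyapunov inequality, so your additional remarks only make explicit what the paper leaves implicit.
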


\begin{proof}
Consider a function $V \in \cC^1(\R^n, \R)$ that satisfies the listed conditions. We show that these conditions guarantee that $V$ is Lyapunov function for system \eqref{eq:compSys} when $\cS$ is described by \eqref{eq:defConSet} under assumption \ref{ass:basicS}. To see this, we first introduce the set $J(x)$ which defines the set of active constraints, that is,
\begin{equation}\label{eq:defActiveSet}
J(x) = \{i \in \{1,\dots,M\} \, \vert \, g_i(x) = 0\}.
\end{equation}
Then, the set-valued mapping $\cN_\cS$ is defined as
\[
\cN_\cS(x) = \begin{cases} 0, & \text{if } x \in \inn(\cS),\\ 
\Big\{\sum_{j \in J(x)} \! \! \lambda_j \nabla g_j(x); \enspace \lambda_j \le 0\Big\}, & \text{if } J(x) \neq \emptyset, \\
\emptyset, & \text{if } x \not\in \cS.
\end{cases}
\]
Thus, if $x \in \inn(S)$, then $\eta = 0$, and 
\[
\langle \nabla V(x), f(x) \rangle \le -\alpha(\| x \|), \quad x \in \inn(S).
\]
When $x$ is such that $J(x) \neq \emptyset$, we have that $\eta = -\sum_{j \in J(x)} \lambda_j \nabla g_j(x)$, for some $\lambda_j \le 0$. Hence,
\[
\langle \eta, \nabla V(x) \rangle = - \left\langle \sum_{j \in J(x)} \lambda_j \nabla g_j(x), \nabla V(x) \right\rangle \le 0.
\] 
Thus, for each $x \in \cS$, and $\eta \in -\cN_{\cS}(x)$, we have shown that
\[
\langle \nabla V(x), f(x) + \eta \rangle \le - \alpha(\| x \|)
\]
which completes the proof.  
\end{proof}

\subsection{Sum-of-Squares Decomposition}

We now present a numerical approach to compute the Lyapunov function which satisfies the conditions of  Proposition~\ref{prop:suffGrad}. The three conditions can be listed as positivity constraints on the function $V$ and its gradient $\nabla V$. As mentioned in the introduction, one way to ensure the positivity is to write the function as a sum-of-squares, which boils down to a semidefinite program. The basic idea behind computing the Lyapunov function for system~\eqref{eq:compSys} under \ref{ass:basicS} is to find a Lyapunov function where the three positivity constraints in Proposition~\ref{prop:suffGrad} can be written as sum-of-squares.

We focus our attention on convex semi-algebraic sets, which are basically described by the intersection of the sublevel sets of finitely many polynomial inequalities. That is, in the definition of the set $\cS$ in \eqref{eq:defConSet}, we introduce the following assumption:
\begin{ass}[leftmargin=*]
\item\label{ass:semiAlg} The set $\cS$ in \eqref{eq:defConSet} is compact and the function $g_i \in \R[x]$, for every $i = 1, \dots, M$.
\end{ass}
For such sets, we can implement the following algorithm to compute $V$ in the form of sum-of-squares.\\
{\bf Algorithm 2:} 
\begin{enumerate}
\item Let $V\in \R[x]$ of degree $d \in \N$;
\item For each $x \in \cS$, let 
\[
V(x) = \sigma_0(x) + \sum_{i=1}^M \sigma_i(x) g_i(x).
\]
for some SOS polynomials $\sigma_0, \cdots, \sigma_M$.
\item For each $x \in \cS$, if $J(x) = \emptyset$, let 
\[
 - \langle \nabla V(x),f(x) \rangle = \chi_0(x) + \sum_{i=1}^M \chi_i(x) g_i(x).
\]
for some SOS polynomials $\chi_0, \cdots, \chi_M$.
\item For each $x \in \cS$, $J(x) \neq \emptyset$, let, for each $j \in J(x)$,
\begin{equation}
- \langle \nabla V(x),\nabla g_j(x) \rangle = \chi_{j,0} (x)+ \sum_{i \not\in J(x)} \chi_{j,i} (x) g_i(x) + \sum_{i \in J(x)}\varphi_{j,i} g_i(x),
\end{equation}
for some SOS polynomials $\chi_{j,i}$, whereas $\varphi_{j,i} \in \R[x]$ are not necessarily sum-of-squares.
\item Iterate by increasing $d$, the degree of $V$.
\end{enumerate}

An important question to consider, in the implementation of Algorithm~2, is whether one can always find SOS decomposition of a positive polynomial on a semialgebraic set. One possible answer to this question comes from the following result: 

\begin{thm}\label{thm:PutinarPsatz}(Putinar's Positivstellensatz~\citep{Puti93})
Let $\cS$ be a compact semialgebraic set satisfying \ref{ass:basicS} and \ref{ass:semiAlg}. Let $\cM_\cS$ be the quadratic module defined as,
\[
\cM_\cS := \Big\{\sigma_0 + \sum_{j=1}^m \sigma_j g_j \, \vert \, j = 0, 1, \dots, m \Big\}.
\]
Let $V \in \R[x]$ be such that $V(x) \ge 0$ for all  $x \in \cS$, then $V \in \cM_\cS$.
\end{thm}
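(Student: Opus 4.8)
The plan is to deduce the statement from the Archimedean form of Putinar's theorem, which is what is actually proved in \citep{Puti93}. Two preliminary points need attention. First, the theorem requires the quadratic module $\cM_\cS$ to be \emph{Archimedean}, and compactness of $\cS$ alone does not guarantee this from the given description \eqref{eq:defConSet}. So I would begin by using compactness (Assumption \ref{ass:semiAlg}) to fix $R>0$ with $\cS \subseteq \{x : \|x\|^2 \le R\}$ and append the redundant generator $g_{M+1}(x) := R^2 - \|x\|^2$; this leaves $\cS$ unchanged but forces $R^2-\|x\|^2 \in \cM_\cS$, hence renders the module Archimedean. Second, the genuine theorem needs \emph{strict} positivity of $V$ on $\cS$, so I would prove that case first and then attempt to recover the stated hypothesis $V \ge 0$ by a limiting argument, flagging the subtlety below.

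For the strict case, suppose $V>0$ on $\cS$ but, for contradiction, $V \notin \cM_\cS$. Since $\cM_\cS$ is a convex cone in the real vector space $\R[x]$, a Hahn--Banach (Eidelheit) separation argument yields a nonzero linear functional $L:\R[x]\to\R$ with $L(p)\ge 0$ for all $p \in \cM_\cS$ and $L(V)\le 0$. The Archimedean property is precisely what upgrades such a nonnegative functional into a genuine \emph{moment} functional: it bounds $L$ on the relevant bounded sets of polynomials and lets one invoke Haviland's theorem, so that $L(p)=\int p\,d\mu$ for some nonnegative Borel measure $\mu$, while the inequalities $L(\sigma\, g_j)\ge 0$ for all SOS $\sigma$ force $\supp\mu \subseteq \cS$. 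But then $L(V)=\int_\cS V\,d\mu > 0$, because $V>0$ on $\cS$ and $\mu\neq 0$, contradicting $L(V)\le 0$. This establishes $V\in\cM_\cS$ whenever $V>0$ on $\cS$.

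To pass from $V>0$ to the stated hypothesis $V\ge 0$, I would perturb: since $\cS$ is compact, $V+\eps>0$ on $\cS$ for every $\eps>0$, so the strict case gives $V+\eps\in\cM_\cS$, and letting $\eps\downarrow 0$ places $V$ in the \emph{closure} of $\cM_\cS$. This last step is the main obstacle, and it is where the statement as worded is delicate: for merely nonnegative $V$ one cannot in general conclude $V\in\cM_\cS$ itself---only $V\in\overline{\cM_\cS}$---since the quadratic module need not be closed. For the way the result is used in Algorithm 2, membership in the closure is exactly what matters, since increasing the degree bound in the SOS certificates sought there yields a convergent hierarchy whose limit recovers any element of $\overline{\cM_\cS}$; whenever strict positivity is available---as it is for the Lyapunov decrease conditions once an arbitrarily small margin is imposed---the exact membership $V\in\cM_\cS$ follows directly from the strict case.
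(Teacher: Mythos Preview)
The paper does not supply its own proof of this theorem; it is stated as a citation of \citep{Puti93} and used as a black box to justify Algorithm~2. Your sketch is essentially the standard argument behind Putinar's result (separation in $\R[x]$, then the Archimedean hypothesis to realise the separating functional as integration against a measure supported in $\cS$), so there is no alternative route in the paper to compare against.

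That said, the two caveats you raise are precisely right and worth making explicit. First, compactness of $\cS$ alone does not make $\cM_\cS$ Archimedean from an arbitrary description; your fix of adjoining $R^2-\|x\|^2$ as a redundant generator is the standard one. Second, the genuine theorem in \citep{Puti93} requires $V>0$ on $\cS$, not $V\ge 0$: with mere nonnegativity one only gets $V\in\overline{\cM_\cS}$, and the quadratic module is not closed in general (the Motzkin polynomial on a suitable box is the classical counterexample). Your limiting argument cannot be sharpened to land in $\cM_\cS$ itself, so the statement as worded in the paper is slightly stronger than what is actually true. Your closing remark---that for the intended application a small strict margin in the Lyapunov inequalities restores strict positivity, and that membership in the closure is exactly what the degree-increasing hierarchy in Algorithm~2 captures---is the correct way to reconcile this with the paper's use of the theorem.
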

A direct application of this result to our problem suggests that, if system~\eqref{eq:compSys} admits a polynomial Lyapunov function, then the hierarchy of semidefinite programs constructed in Algortihm~2 (by increasing the degree $d$ of the search function) is guaranteed to find us a Lyapunov function. To compute $V$ with such a parameterization, one may use the YALMIP toolbox in Matlab to solve the underlying semidefinite program.

\begin{exam}
As an illustration of the foregoing algorithm, we consider an academic example in $\R^2$ with two constraints. Let $g_1(x) = x_1-x_2^2$, and $g_2(x) = 1-x_1$. These two functions describe the compact semi-algebraic set $\cS$ in \eqref{eq:compSys}, and we take vector field $f$ to be
$
f(x) = \begin{psmallmatrix} -x_1^2 \\ 0 \end{psmallmatrix}.
$
Based on Algorithm~2, a Lyapunov function for this example is
$
V(x) = x_1^2 + x_2^2
$
which indeed satisfies the conditions listed in Proposition~\ref{prop:suffGrad}. Note that the system without constraints, that is, $\dot x = f(x)$ is only stable, but not asymptotically stable. However, the constrained system is asymptotically stable since, within the set $\cS$, $x_1 = 0$ implies $x_2 = 0$.
\end{exam}



\Urlmuskip=0mu plus 1mu\relax
{\small

}


\begin{thebibliography}{23}
\providecommand{\natexlab}[1]{#1}
\providecommand{\url}[1]{\texttt{#1}}
\providecommand{\urlprefix}{URL }
\expandafter\ifx\csname urlstyle\endcsname\relax
  \providecommand{\doi}[1]{doi:\discretionary{}{}{}#1}\else
  \providecommand{\doi}{doi:\discretionary{}{}{}\begingroup
  \urlstyle{rm}\Url}\fi

\bibitem[{Acary et~al.(2011)Acary, Bonnefon, and Brogliato}]{acary2011}
Acary, V., Bonnefon, O., and Brogliato, B. (2011).
\newblock \emph{Nonsmooth Modeling and Simulation for Switched Circuits},
  volume~69 of \emph{Lecture Notes in Electrical Engineering}.
\newblock Springer, Heidelberg.

\bibitem[{Adly(2017)}]{adly2017}
Adly, S. (2017).
\newblock \emph{A Variational Approach to Nonsmooth Dynamics. Applications in
  Unilateral Mechanics and Electronics}.
\newblock Springer Briefs in Mathematics. Springer International Publishing,
  Cham.

\bibitem[{Ahmadi and Jungers(2018)}]{AhmaJung18}
Ahmadi, A. and Jungers, R. (2018).
\newblock {SOS}-{C}onvex {L}yapunov functions and stability of difference
  inclusions.
\newblock Available online: \url{https://arxiv.org/abs/1803.02070}.

\bibitem[{Ahmadi and Parrilo(2017)}]{AhmPar17}
Ahmadi, A. and Parrilo, P. (2017).
\newblock Sum of squares certificates for stability of planar, homogeneous, and
  switched systems.
\newblock \emph{IEEE Transactions on Automatic Control}, 62(10), 5269--5274.

\bibitem[{Brogliato and Tanwani(2020)}]{BrogTanw20}
Brogliato, B. and Tanwani, A. (2020).
\newblock Dynamical systems coupled with monotone set-valued operators:
  Formalisms, applications, well-posedness, and stability.
\newblock \emph{SIAM Review}, 62, 3--129.

\bibitem[{Bundfuss and D\"{u}r(2008)}]{BunDur08}
Bundfuss, S. and D\"{u}r, M. (2008).
\newblock Algorithmic copositivity detection by simplicial partition.
\newblock \emph{Linear Algebra and its Applications}, 428, 1511--1523.

\bibitem[{Bundfuss and D\"{u}r(2009)}]{BunDur09}
Bundfuss, S. and D\"{u}r, M. (2009).
\newblock An adaptive linear approximation algorithm for copositive programs.
\newblock \emph{SIAM J. Optim.}, 20(1), 30--53.

\bibitem[{Camlibel et~al.(2006)Camlibel, Pang, and Shen}]{CamlPang06}
Camlibel, M., Pang, J.S., and Shen, J. (2006).
\newblock Lyapunov stability of complementarity and extended systems.
\newblock \emph{{SIAM} J.~Optimization}, 17(4), 1056--1101.

\bibitem[{Chesi et~al.(2009)Chesi, Garulli, Tesi, and Vicino}]{Chesi09}
Chesi, G., Garulli, A., Tesi, A., and Vicino, A. (2009).
\newblock \emph{Homogeneous Polynomial Forms for Robustness Analysis of
  Uncertain Systems}.
\newblock LNCIS. Springer.

\bibitem[{Goeleven and Brogliato(2004)}]{GoelBrog04}
Goeleven, D. and Brogliato, B. (2004).
\newblock Stability and instability matrices for linear evolution variational
  inequalities.
\newblock \emph{{IEEE} Transactions on Automatic Control}, 49(4), 521--534.

\bibitem[{Goeleven et~al.(2003)Goeleven, Motreanu, and Motreanu}]{GoelMotr03}
Goeleven, D., Motreanu, D., and Motreanu, V. (2003).
\newblock On the stability of stationary solutions of first-order evolution
  variational inequalities.
\newblock \emph{Advances in Nonlin. Variational Ineqs.}, 6, 1--30.

\bibitem[{Henrion(2013)}]{Henr13}
Henrion, D. (2013).
\newblock Optimization on linear matrix inequalities for polynomial systems
  control.
\newblock \emph{Les cours du {C.I.R.M.}}, 3(1), 1--44.

\bibitem[{Henrion and Garulli(2005)}]{HenrGaru05}
Henrion, D. and Garulli, A. (eds.) (2005).
\newblock \emph{Positive Polynomials in Control}.
\newblock Springer, New York, USA.

\bibitem[{Lasserre(2015)}]{Lass15}
Lasserre, J. (2015).
\newblock \emph{An Introduction to Polynomial and Semi-Algebraic Optimization}.
\newblock Cambridge Texts in Applied Maths.

\bibitem[{Leine and van~de Wouw(2008)}]{leine2008}
Leine, R.I. and van~de Wouw, N. (2008).
\newblock \emph{Stability and Convergence of Mechanical Systems with Unilateral
  Constraints}, volume~36 of \emph{Lecture Notes in Applied and Computational
  Mechanics}.
\newblock Springer-Verlag, Berlin Heidelberg.

\bibitem[{Nie et~al.(2018)Nie, Yang, and Zhang}]{NiYaZha18}
Nie, J., Yang, Z., and Zhang, X. (2018).
\newblock A complete semidefinite algorithm for detecting copositive matrices
  and tensors.
\newblock \emph{SIAM J. Optim.}, 28(4), 2902--2921.

\bibitem[{Papachristodoulou and Prajna(2009)}]{PapaPraj09}
Papachristodoulou, A. and Prajna, S. (2009).
\newblock Robust stability analysis of nonlinear hybrid systems.
\newblock \emph{{IEEE} Transactions on Automatic Control}, 54(5), 1037--1043.

\bibitem[{Parrilo(May 2000)}]{Parr00}
Parrilo, P. (May 2000).
\newblock \emph{Structured semidefinite programs and semi-algebraic geometry
  methods in robustness and optimization}.
\newblock Ph.D. thesis, California Institute of Technology, Pasadena, CA.

\bibitem[{Powers and W\"ormann(1998)}]{Powers98}
Powers, V. and W\"ormann, T. (1998).
\newblock An algorithm for sums of squares of real polynomials.
\newblock \emph{J. of Pure and Applied Algebra}, 127(1), 99--104.

\bibitem[{Prajna et~al.(2002)Prajna, Papachristodoulou, and
  Parrilo}]{PrajPapa02}
Prajna, S., Papachristodoulou, A., and Parrilo, P.A. (2002).
\newblock {SOSTOOLS}: {S}um of squares optimization toolbox for {MATLAB}.
\newblock Available online: \url{http://www.cds.caltech.edu/sostools}.

\bibitem[{Putinar(1993)}]{Puti93}
Putinar, M. (1993).
\newblock Positive polynomials on compact semi-algebraic sets.
\newblock \emph{Indiana Univ.~Math.~Journal}, 42(3), 969--984.

\bibitem[{Souaiby et~al.(2019)Souaiby, Tanwani, and Henrion}]{SouaTanw19pp}
Souaiby, M., Tanwani, A., and Henrion, D. (2019).
\newblock Cone-copositive Lyapunov functions for complementarity systems:
  {C}onverse result and Polynomial Approximation.
\newblock Submitted for publication. Available online:~
\url{https://hal.archives-ouvertes.fr/hal-02565283}

\bibitem[{Tanwani et~al.(2018)Tanwani, Brogliato, and Prieur}]{TanwBrog18}
Tanwani, A., Brogliato, B., and Prieur, C. (2018).
\newblock Well-posedness and output regulation for implicit time-varying
  evolution variational inequalities.
\newblock \emph{SIAM J. Control \& Optim.}, 56(2), 751--781.

\end{thebibliography}
\end{document}